\tikzstyle arrowstyle=[scale=1]
\tikzstyle directed=[postaction={decorate,decoration={markings,
    mark=at position .65 with {\arrow[arrowstyle]{stealth}}}}]
\tikzstyle reverse directed=[postaction={decorate,decoration={markings,
    mark=at position .65 with {\arrowreversed[arrowstyle]{stealth};}}}]
\newtheorem{thm}{Theorem}
\newtheorem{biog}{}
\newtheorem{affil}{}
\newtheorem{lemma}{Lemma}
\newtheorem{prop}{Proposition}
\theoremstyle{definition}
\newtheorem{definition}{Definition}
\newtheorem{remark}{Remark}
\newtheorem{example}{Example}
\begin{document}

\title{A Necessary and Sufficient Condition for Local Maxima of Polynomial Modulus Over Unit Disc}

\author{Bahman Kalantari \\
Department of Computer Science, Rutgers University, NJ\\
kalantari@cs.rutgers.edu
}
\date{}
\maketitle

\begin{abstract}
An important quantity associated with a complex polynomial $p(z)$ is $\Vert p \Vert_\infty$, the maximum of its modulus over the unit disc $D$.  We prove, $z_* \in D$ is a local maximum of $|p(z)|$ if and only if $a_*$ satisfies, $z_*=p(z_*)|p'(z_*)|/p'(z_*)|p(z_*)|$, i.e. it is proportional to its corresponding Newton direction.  This explicit formula gives rise to novel iterative algorithms for computing $\Vert p \Vert_\infty$.  We describe two such algorithms, including a Newton-like method and  present some visualization of their performance.
\end{abstract}

{\bf Keywords:} Complex Polynomial, Infinity Norm, Fixed Point, Iterative Methods, Polynomiography.

\section{Introduction}
For a complex number $z=x+ iy$,  its conjugate and modulus are $\overline z=z-iy$ and $|z|=\sqrt{z \overline z}=\sqrt{x^2+y^2}$, respectively. Given a complex polynomial $p(z)$, the maximum  of $|p(z)|$ on the unit disc $D=\{z: \quad |z| \leq 1\}$, denoted by $\Vert p \Vert_\infty$, is an important quantity associated with $p(z)$. This quantity can also be used to bound some other attributes and norms of a complex polynomial, see e.g.  \cite{Bor}, \cite{Green}.  In this article we study the optimization of the modulus of a polynomial over the unit disc and offer new algorithms to compute  $\Vert p \Vert_\infty$.

By the {\it maximum modulus principle}, see e.g  \cite{Bak1},  \cite{Kal},  $\Vert p \Vert_\infty$ is attained at a boundary point of $D$, i.e.

\begin{equation} \label{eq1}
\Vert p \Vert_\infty = \max \big \{ |p(z)|: \quad z \in D,  \quad  |z|=1 \big \}.
\end{equation}

From (\ref{eq1}) and the polar form of complex numbers we have,
\begin{equation} \label{eq2}
\Vert p \Vert_\infty = \max \big \{q(t)=|p(e^{it})|: \quad t \in [0,2\pi] \big \}.
\end{equation}

Green \cite{Green} makes use of (\ref{eq2}) and a lemma due to Ste\v{c}kin that gives lower bounds to $q(t)$ on subintervals of $[0,2\pi]$ to describe a partitioning algorithm  for computing $\Vert p \Vert_\infty$. For each subinterval the modulus is evaluated at the midpoint.  This allows discarding a subinterval, or demands further partitioning.  Green's algorithm may require many polynomial evaluations due to the possibility of considering a large number of subintervals.

In this article we derive an explicit formula as a necessary and sufficient condition for any local maximum of $|p(z)|$ over $D$.  In particular, any $z_*$ for which $\Vert p \Vert_\infty=|p(z_*)|$ must necessarily satisfy the formula. The formula gives an explicit connection between  a local maximum $z_*$ on the unit circle and the negation of Newton direction, $p(z_*)/p'(z_*)$, at that point. Clearly, not only such an explicit formula is interesting from the theoretical point of view, but from the algorithmic point of view as well.
The derivation of the formula is based on a property of polynomials referred as {\it Geometric Modulus Principle} proved in \cite{Kal}, a result that characterizes the cone of ascent and descent directions of polynomial modulus at an arbitrary point in the complex plane. Having derived this formula, we then describe some iterative algorithms that are inspired by it, in particular Pseudo-Newton  method.   Finally, we demonstrate some visualization of the  performance of these algorithms via images for some specific polynomials.  These images are visually reminiscent of those derived from {\it polynomiography}, a term used for algorithmic visualization in solving a polynomial equation, see \cite{Kalan}. Conceptually too they may fit well in that category.  Polynomiograpy images are not necessarily fractal in nature.

In Section 1, we review the Geometric Modulus Principle in \cite{Kal}. In Section 2, we derive the main formula.  In Section 3, we describe some relevant algorithms based on the formula and present visualizations for some specific polynomials. We conclude with some remarks.

\section{Review of Geometric Modulus Principle}

\begin{definition}  Consider a complex polynomial $p(z)$ at a given  complex number $z_0$. Given a real number $\theta$, if there exists $t_* >0$ such that for all $t \in (0, t_*)$,  $|p(z_0+ t e^{i \theta})| > |p(z_0)|$, we call  $e^{i \theta}$ an {\it ascent direction}  for $|p(z)|$ at  $z_0$. We refer to $\theta$ as an {\it argument of ascent} and any $t \in (0, t_*)$ as a {\it step-size}. We refer to the collection of all ascent directions as the {\it cone of ascent} at $z_0$.  The {\it cone of descent} is defined analogously.
\end{definition}

Ascent and descent directions centered at $z_0$ can be used to give a partitioning of a disc of certain radius  centered at $z_0$. Since we are interested in the shape of the cone as opposed to the radius, we may consider the unit disc at $z_0$.  The following theorem reveals the geometric property of this partitioning (see Figure \ref{Fig1}).

\begin{thm} \label{thm1} {\bf (Geometric Modulus Principle) \cite{Kal}} Let $p(z)$ be a nonconstant polynomial. If $p(z_0)=0$, then every direction at $z_0$ is an ascent direction for $|p(z)|$. If $p(z_0) \not =0$, then the cones of ascent and descent directions at $z_0$ partition the unit disc centered at $z_0$ into alternating ascent and descent sectors of equal angle $\pi/k$, where $k \geq 1$ is the smallest index with $p^{(k)}(z_0) \not=0$. Specifically, if
$\overline {p(z_0)} p^{(k)}(z_0)=re^{{i} \alpha}$ then $e^{i\theta}$ is an ascent direction if $\theta$ satisfies
\begin{equation}
\frac{2N \pi - \alpha}{k}- \frac{\pi}{2k} < \theta < \frac{2N \pi -\alpha}{k}+ \frac{\pi}{2k},
\end{equation}
where $N$ is an integer. The ascent directions thus define $k$ sectors in the unit disc, the boundary lines excluded.  The interiors of the remaining sectors in the unit disc form the sectors of descent. A boundary line between two sectors is either an ascent direction or a descent direction. $\square$
\end{thm}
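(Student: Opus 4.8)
The plan is to read off the local behavior of $|p(z)|$ along each ray leaving $z_0$ directly from the Taylor expansion of $p$ about $z_0$. Writing $a = p(z_0)$ and $b = p^{(k)}(z_0)/k!$, and using that $p^{(j)}(z_0)=0$ for $1 \le j \le k-1$ by the very choice of $k$ as the smallest index of a nonvanishing derivative, I would record
\[
p(z_0 + t e^{i\theta}) = a + b\, t^k e^{ik\theta} + O(t^{k+1}), \qquad t \to 0^+ .
\]
Everything then reduces to controlling the sign of $|p(z_0 + t e^{i\theta})|^2 - |a|^2$ for small $t > 0$, since $|p|$ and $|p|^2$ have the same monotonicity.

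Next I would square the expansion by multiplying it against its conjugate. The $|b|^2 t^{2k}$ term and all mixed terms involving the remainder are $O(t^{k+1})$ for $k \ge 1$, so
\[
|p(z_0 + t e^{i\theta})|^2 = |a|^2 + 2\,\mathrm{Re}\!\left(\overline{a}\,b\, e^{ik\theta}\right) t^k + O(t^{k+1}).
\]
Since $\overline{a}\, b = \overline{p(z_0)}\,p^{(k)}(z_0)/k! = (r/k!)\,e^{i\alpha}$, the coefficient of $t^k$ equals $(2r/k!)\cos(\alpha + k\theta)$. When $p(z_0)\neq 0$ we have $r>0$, so whenever $\cos(\alpha + k\theta)\neq 0$ the leading term dominates and the sign of $|p(z_0+te^{i\theta})|^2 - |a|^2$ for all sufficiently small $t>0$ agrees with the sign of $\cos(\alpha+k\theta)$. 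Hence $e^{i\theta}$ is an ascent direction exactly when $\cos(\alpha+k\theta)>0$, and rearranging $-\tfrac{\pi}{2} + 2N\pi < \alpha + k\theta < \tfrac{\pi}{2} + 2N\pi$ gives precisely the stated inequality. As $N$ runs over a complete residue system modulo $k$, the centers $\theta = (2N\pi-\alpha)/k$ are spaced $2\pi/k$ apart, producing $k$ ascent sectors of angular width $\pi/k$ separated by $k$ descent sectors of equal width, which establishes the alternating partition. The degenerate case $p(z_0)=0$ is immediate: then $a=0$ and $|p(z_0+te^{i\theta})| = |b|\,t^k + O(t^{k+1}) > 0 = |p(z_0)|$ for every $\theta$ and all small $t>0$, so every direction ascends.

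The delicate part, and the step I expect to be the main obstacle, is the boundary directions, where $\cos(\alpha+k\theta)=0$: the $t^k$ term vanishes and the leading-order analysis above is silent. Here I would fix such a $\theta$ and observe that $g(t) := |p(z_0+te^{i\theta})|^2 - |a|^2$ is a genuine real polynomial in $t$ with $g(0)=0$, since $h(t):=p(z_0+te^{i\theta})$ is a polynomial in $t$ and $g(t)=h(t)\overline{h(t)}-|a|^2$ for real $t$. If $g$ is not identically zero, the sign of its lowest-order nonvanishing coefficient decides, without ambiguity, whether the ray ascends or descends for small $t>0$, so the boundary line is either an ascent or a descent direction. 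It therefore remains only to rule out $g\equiv 0$. This follows because $h$ has the same degree $n=\deg p \ge 1$ in $t$ as $p$ does in $z$ (its leading coefficient is that of $p$ times $e^{in\theta}\neq 0$), whence $|h(t)|\to\infty$ and $g(t)\to\infty$ as $t\to+\infty$, forcing $g\not\equiv 0$. This confirms that each boundary line is cleanly ascent or descent and completes the argument.
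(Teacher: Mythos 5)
Your proposal is correct and follows essentially the same route as the paper's own argument: expand $F_\theta(t)=|p(z_0+te^{i\theta})|^2$ as a polynomial in $t$, identify the coefficient of $t^k$ as a positive multiple of $\cos(\alpha+k\theta)$, and read off the ascent/descent sectors from its sign. The only difference is that you spell out the boundary case $\cos(\alpha+k\theta)=0$ (via the lowest nonvanishing coefficient of the real polynomial $g(t)$, which cannot vanish identically since $g(t)\to\infty$), a detail the paper's proof leaves implicit; this is a welcome addition but not a different method.
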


\begin{figure}[htpb]
	\centering
	
	\begin{tikzpicture}[scale=.5]
		\begin{scope}[black]
		 \draw  (0.0,0.0) circle (3.);
		 \draw (7.0,0.0) circle (3.);
		 \draw (14.0,0.0) circle (3.);
         \draw (21.0,0.0) circle (3.);
\filldraw (0,0) circle (2pt);
\filldraw (7,0) circle (2pt);
\filldraw (14,0) circle (2pt);
\filldraw (21,0) circle (2pt);
\end{scope}
\begin{scope}[black]
		 \clip  (0.0,0.0) circle (3.);
\fill[color=gray!39] (-10, 10) rectangle (10, -10);
\filldraw (0,0) circle (2pt);
\end{scope}

\begin{scope}[black]
		 \clip  (7.0,0.0) circle (3.);
\clip (4.0,0.0) -- (10.0,.0) -- (10.0,3.0) -- (4.0, 3.0)-- cycle;
\fill[color=gray!39] (-20, 20) rectangle (20, -20);
\filldraw (7,0) circle (2pt);
\end{scope}

\begin{scope}[black]
		 \clip  (14.0,0.0) circle (3.);
\clip (14.0,0.0) -- (17.0,.0) -- (17.0,3.0) -- (14.0, 3.0)-- cycle;
\fill[color=gray!39] (-20, 20) rectangle (20, -20);
\filldraw (14,0) circle (2pt);
\end{scope}
\begin{scope}[black]
\clip  (14.0,0.0) circle (3.);
\clip (11.0,0.0) -- (14.0,.0) -- (14.0,-3.0) -- (11.0, -3.0)-- cycle;
\fill[color=gray!39] (-20, 20) rectangle (20, -20);
\filldraw (14,0) circle (2pt);
\end{scope}

\begin{scope}[black]
\clip  (21.0,0.0) circle (3.);
\clip (21.0,0.0) -- (27.0,0.0)  -- (24,-5.196)-- cycle;
\fill[color=gray!39] (-30, 30) rectangle (30, -30);
\filldraw (21,0) circle (2pt);
\end{scope}

\begin{scope}[black]
\clip  (21.0,0.0) circle (3.);
\clip (21.0,0.0) -- (15.0,0.0)  -- (18,-5.196)-- cycle;
\fill[color=gray!39] (-30, 30) rectangle (30, -30);
\filldraw (21,0) circle (2pt);
\end{scope}

\begin{scope}[black]
\clip  (21.0,0.0) circle (3.);
\clip (21.0,0.0) -- (24.0,5.196)  -- (18,5.196)-- cycle;
\fill[color=gray!39] (-30, 30) rectangle (30, -30);
\filldraw (21,0) circle (2pt);
\end{scope}

\end{tikzpicture}
	\caption{From left to right: Sectors of ascent (gray) and descent (white) at $z_0$ for $p(z_0)=0$, and $p(z_0) \not =0$, $k=1,2,3$.}
\label{Fig1}
\end{figure}
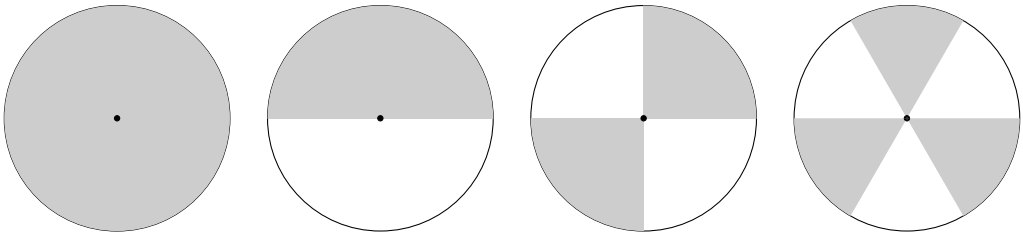

\section{A Necessary and Sufficient Condition for Local Maxima}

In this section we state and prove our main result.

\begin{thm} \label{thm2} Let $p(z)$ be a nonconstant polynomial. Let $D=\{z: |z| \leq 1\}$.  A point $z_* \in D$ is a local maximum of $|p(z)|$ over $D$ if and only if
\begin{equation} \label{eq3}
z_*= \bigg({\frac{p(z_*)}{p'(z_*)}} \bigg ) \bigg / \bigg ( \bigg |{\frac{p(z_*)}{p'(z_*)}} \bigg | \bigg ).
\end{equation}
\end{thm}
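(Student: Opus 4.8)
The plan is to reduce the problem to the boundary of $D$ and then read the condition off from the Geometric Modulus Principle (Theorem \ref{thm1}). First I would record two reductions. Since $p$ is nonconstant, $|p|$ has no local maximum in the interior of $D$: at any interior $z_0$ Theorem \ref{thm1} furnishes an ascent direction (every direction ascends if $p(z_0)=0$, and otherwise there are $k\ge 1$ ascent sectors), so no interior point can be a local maximum. Hence any local maximum $z_*$ has $|z_*|=1$, and moreover $p(z_*)\neq 0$ (a zero of $p$ is a strict local minimum of $|p|$). Note also that the right-hand side of (\ref{eq3}) is only meaningful when $p'(z_*)\neq 0$, so part of the work will be to prove that a local maximum forces $p'(z_*)\neq 0$.

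For a boundary point $z_*$, the directions $d$ keeping $z_*+td$ in $D$ for small $t>0$ form the open inward half-plane $\{d:\operatorname{Re}(\overline{z_*}\,d)<0\}$. The key observation is that $z_*$ is a local maximum exactly when no such admissible direction is an ascent direction, i.e. the cone of ascent must be disjoint from this inward half-plane. I would feed this into Theorem \ref{thm1}: the ascent cone is $k$ sectors of angle $\pi/k$ alternating with $k$ descent sectors, so each descent gap has angular width $\pi/k$. When $k\ge 2$ this gap is at most $\pi/2$, so an open arc of width $\pi$ (the inward half-plane) cannot fit inside a single descent sector and must meet an ascent sector, contradicting maximality. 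Thus $k=1$, i.e. $p'(z_*)\neq 0$, and the single ascent half-plane must coincide with the outward half-plane $\{d:\operatorname{Re}(\overline{z_*}\,d)>0\}$. Writing $\overline{p(z_*)}\,p'(z_*)=re^{i\alpha}$, Theorem \ref{thm1} gives the ascent half-plane centered on the direction $e^{-i\alpha}=p(z_*)\overline{p'(z_*)}/(|p(z_*)||p'(z_*)|)$; equating this center with the outward normal $z_*$ yields $z_*=p(z_*)\overline{p'(z_*)}/(|p(z_*)||p'(z_*)|)$, which is precisely the right-hand side of (\ref{eq3}) after the identity $p\overline{p'}/(|p||p'|)=(p/p')/|p/p'|$. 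This necessity direction is the clean half of the argument. It is convenient to record the equivalent algebraic form $z_*\,p'(z_*)\,\overline{p(z_*)}=|p(z_*)\,p'(z_*)|$, which makes the sign (steepest ascent pointing radially outward, not inward) explicit.

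For the converse I would start from (\ref{eq3}) in the form $z_*p'(z_*)\overline{p(z_*)}=|p(z_*)p'(z_*)|>0$. Since $p'(z_*)\neq0$ we are in the case $k=1$, and this equality says the ascent half-plane is the outward half-plane; in particular the directional derivative of $|p|^2$ along the inward normal $-z_*$ is $-2\operatorname{Re}(z_*p'(z_*)\overline{p(z_*)})=-2|p(z_*)||p'(z_*)|<0$, so moving strictly into $D$ decreases $|p|$. The remaining and genuinely delicate point is the behaviour along the unit circle itself: the tangent directions $\pm i z_*$ lie exactly on the boundary of the ascent cone, where Theorem \ref{thm1} is silent about ascent versus descent, and the arc $\{e^{it}\}$ leaves $z_*$ tangentially rather than along a straight admissible ray. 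I expect this to be the main obstacle. To close it one must pass to second order, expanding $q(t)=|p(e^{it})|$ at the stationary parameter $t_*$ with $z_*=e^{it_*}$ and verifying $q''(t_*)\le 0$, or equivalently arguing that the tangential boundary rays are descent. This is exactly the information separating a true boundary maximum from a stationary point where $|p|$ decreases radially while increasing along the circle, so the second-order analysis on the arc is the crux of sufficiency; it should be approached carefully, as it is the step at which the first-order formula alone does not obviously suffice and the real work of the proof is concentrated.
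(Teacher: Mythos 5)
Your necessity argument is sound and is essentially the paper's: reduce to the boundary, rule out $k\ge 2$, and identify the single ascent half-plane with the outward half-plane to obtain (\ref{eq3}). (Your angular-width count --- an open arc of width $\pi$ of inward directions cannot hide inside a closed descent sector of width $\pi/k\le\pi/2$ --- is in fact more rigorous than the paper's appeal to its figure.) The gap is in the converse, and you have put your finger on exactly the right spot: (\ref{eq3}) pins down only first-order behaviour, the tangent directions $\pm iz_*$ lie on the boundary of the ascent cone where Theorem \ref{thm1} is silent, and the circle leaves $z_*$ tangentially. But you stop at announcing that one ``must verify $q''(t_*)\le 0$''; you do not verify it, and it cannot be verified, because it is false in general. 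Take $p(z)=10z-9.9$ and $z_*=1$: then $p(1)=0.1$, $p'(1)=10$, so $\bigl(p(1)/p'(1)\bigr)/\bigl|p(1)/p'(1)\bigr|=1=z_*$ and (\ref{eq3}) holds, yet $|p(e^{is})|^2=198.01-198\cos s=|p(1)|^2+99s^2+O(s^4)$ strictly increases as one moves along the unit circle away from $z_*$, so $z_*$ is a strict local \emph{minimum} of $|p|$ on $\partial D$ and not a local maximum over $D$. (Geometrically, $|p|$ is proportional to the distance to the interior point $0.99$, whose level circle through $z_*=1$ is more sharply curved than $\partial D$, so the boundary circle escapes into the region where $|p|$ is larger.) In general, for $z_*$ satisfying (\ref{eq3}) with $r=|p(z_*)p'(z_*)|$ one finds
\[
|p(e^{is}z_*)|^2-|p(z_*)|^2=\Bigl(|p'(z_*)|^2-\operatorname{Re}\bigl(\overline{p(z_*)}\,p''(z_*)\,z_*^2\bigr)-r\Bigr)s^2+O(s^3),
\]
and nothing in (\ref{eq3}) forces this coefficient to be nonpositive.

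So your proposal establishes necessity but not sufficiency, and the sufficiency half is unprovable as stated: (\ref{eq3}) is the first-order stationarity condition for maximizing $|p|$ on $|z|=1$ with the correct sign of the multiplier, hence necessary but not sufficient. For comparison, the paper's own converse is the single sentence that the cone of ascent ``must be as in Figure \ref{Fig2}. But this implies $z_*$ must be a local maximum,'' which silently crosses exactly the tangential boundary rays you flagged. Your instinct that the real work of the converse is concentrated there was correct; that work cannot be completed without adding the displayed second-order inequality as an extra hypothesis, and you should say so rather than leave it as a step to be ``approached carefully.''
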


\begin{proof}
From Theorem \ref{thm1} it easily  follows  that any local maximum of $z_*$ of $|p(z)|$ in $D$ must lie on the boundary of $D$.  In fact Theorem \ref{thm1} implies the maximum modulus principle for polynomials.  Clearly, $p(z_*) \not= 0$.  Furthermore, from Theorem \ref{thm1} it must be the case that $p'(z_*) \not =0$ since otherwise the cone of ascent and descent directions at $z_*$ would split into four or more sectors (see Figure \ref{Fig1} the images on the right) and this would contradict that $z_*$ is a local maximum. Thus the index $k$ in Theorem \ref{thm1} must equal $1$.

It follows that the only possibility for the cone of ascent at $z_*$ is as shown in Figure \ref{Fig2}, i.e. the boundary line between the cones of ascent and descent at $z_*$ must be tangential to the unit disc, and the cone of ascent (the gray area) must be pointing outward as shown in the figure.  Otherwise, $z_*$ would not be a local maximum of $|p(z)|$ over $D$.

Since $k=1$, from Theorem \ref{thm1}, $\overline {p(z_*)} p'(z_*)=r e^{i \alpha}$ and the cone of ascent is the set of all directions $e^{i\theta}$ satisfying
\begin{equation} \label{eqinterval}
- \frac{\pi}{2}- \alpha < \theta < \frac{\pi}{2} - \alpha.
\end{equation}
This can only happen  (see Figure \ref{Fig2}) if we have $\overline z_* =e^{i \alpha}$, i.e.
\begin{equation} \label{eqx}
\overline {p(z_*)} p'(z_*)= r \overline z_*.
\end{equation}
Taking the conjugate of both sides in (\ref{eqx}) we get
\begin{equation} \label{eqx1}
p(z_*) \overline {p'(z_*)} = r z_*.
\end{equation}
Since $|z_*|=1$, from (\ref{eqx1}) we have $r= |p(z_*) \overline {p'(z_*)}|$. Substituting this in the above proves the main formula, (\ref{eq3}).

Conversely, suppose that  $z_*$ satisfies (\ref{eq3}).  Then from Theorem \ref{thm1} the cone of ascent at $z_*$ must be as in Figure \ref{Fig2}. But this implies $z_*$ must be a local maximum.
\end{proof}

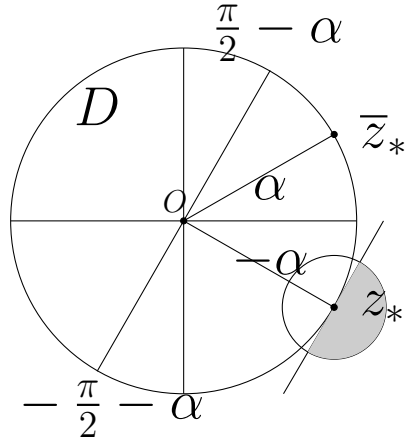
\begin{figure}[htpb]
	\centering
	
	\begin{tikzpicture}[scale=2.3]
		\begin{scope}[black]
		 \draw  (0.0,0.0) circle (1);
		 \draw (.87,-0.5) circle (.3);
\filldraw (.87,-0.5) circle (.5pt);
\filldraw (0,0) circle (.5pt);
\draw (0.5,0.87) -- (-.5,-.87);
\draw (1,0) -- (-1,0);
\draw (0,1) -- (0,-1);
\draw (.87,-0.5) -- (0,0);
\draw (.87,0.5) -- (0,0);
\draw (.5,.07)  node[above] {\huge{$\alpha$}};
\draw (.5,-.07)  node[below] {\huge{$-\alpha$}};
\draw (-0.05,0)  node[above] {\large{$O$}};
\draw (-0.5,.5)  node[above] {\huge{$D$}};
\end{scope}

\begin{scope}
\draw  (0.87,-0.5) --(1.155,0);
\draw  (0.87,-0.5) --(.577, -1);
\end{scope}

\begin{scope}[black]
		 \clip  (.87,-0.5) circle (.3);
\clip (1.155,0) -- (.0,-2) -- (10.0,-2) -- (1.55,0)--cycle;
\fill[color=gray!39] (-40, 40) rectangle (40, -40);
\end{scope}
\filldraw (.87,-0.5) circle (.5pt) node[right] {\huge{$~z_*$}};
\filldraw (.87,0.5) circle (.5pt) node[right] {\huge{$~\overline{z}_*$}};
\draw (0.5,0.87) node[above] {\huge{$~\frac{\pi}{2}-\alpha$}};
\draw (-.5,-.87) node[below] {\huge{$~-\frac{\pi}{2}-\alpha$}};
\end{tikzpicture}
	\caption{The shape of cone ascent at a local max $z_*$ and its relationship with $\alpha$.}
\label{Fig2}
\end{figure}

\section{Algorithmic Applications}

The explicit formula  (\ref{eq3}) has several applications. Firstly, it is a convenient formula to efficiently test if a given $z$ on the boundary of $D$ is a local maximum of $|p(z)|$ on $D$.  It only requires evaluation of $p(z)$ and its derivative. Additionally,  by evaluating the gap between $z$ and the corresponding right-hand-side of (\ref{eq3}) we get a measure of how close the point is to being a local maximum. A more important application of the formula is in deriving algorithms for computing $\Vert p \Vert_\infty$.   We will describe two such algorithms next and also give examples of their behavior.

\subsection{Direct Fixed Point Iterations}

From Theorem \ref{thm2}, $z_*$ is a local maximum of $|p(z)|$ over $D$ if and only if it is a fixed point of
\begin{equation} \label{eq4}
F(z)= \bigg( \frac{p(z)}{p'(z)} \bigg) \bigg / \bigg ( \bigg | \frac{p(z)}{p'(z)} \bigg |\bigg ).
\end{equation}
A straightforward algorithm is to apply the fixed point iteration to $F(z)$: Given a seed  $z_0 \in \mathbb{C}$, set
\begin{equation} \label{eq5}
z_{k+1}= F(z_k), \quad k =0, 1, \dots
\end{equation}
The analysis of convergence behavior is however nontrivial. First we give a definition.

\begin{definition}  \label{deffix}

(i) The {\it basins of attraction} of a fixed point $z_*$ of $F(z)$ is the set of points $z_0 \in \mathbb{C}$ whose {\it orbit}, $\{F(z_k): k=0,1, \dots\}$,  converge to $z_*$.

(ii) We say a fixed point $z_*$ is {\it attractive} if there is a neighborhood $N$ of $z_*$ such that for any $z \in N$ the fixed point iteration converges to $z_*$.

(iii) We say  a fixed point $z_*$ is {\it repulsive} if there exists a neighborhood $N$ or $z_*$ such that $N \cap F(N) = \{z_*\}$.  In other words, excluding $z_*$, the image of $N$ under $F$  is moved outside of $N$.

(iv) We say a fixed point $z_*$ is {\it indifferent} if it is not attractive but the basin of attraction of $z_*$ contains an open set. \end{definition}

For a rational function, i.e. the quotient of two polynomials, the characterization of fixed points is simply in terms of the modulus of its derivative at the fixed point (see e.g. Beardon \cite{Bea}).  However, $F(z)$ in (\ref{eq4})
is not differentiable at a fixed point.  Analogous to the visual study of typical iterative methods  such as Newton method,  we can consider the performance of the iterations of $F(z)$, say in a square containing the unit disc $D$ and based on the behavior of their orbits associate color coding to points in this square. Figure \ref{Fig3}  gives a color coding for a few cases to be described in the example below.

\begin{example}  Consider $p(z)=z^n-1$, $n >1$.  The modulus  of $p(z)$ is maximum at $z_*$ if and only if $z_*^n=-1$.  For $n=2$, the fixed points of $F(z)$ are $\pm i$.  For $n=3$, the fixed points are the negation of the cube-roots of unity.  For $n=4$, the fixed points are $\pm \sqrt{2}/2 \pm i \sqrt{2}/2$. For $n=5$, the fixed points are the negation of the fifth roots of unity.

Figure \ref{Fig3} gives a visualization of the fixed point iterations of $F(z)$ in the square $[-1.5, 1.5] \times [-1.5, 1.5]$ and a coloring of the basins of attraction. As we see these basins are  well-behaved for $n=2,3$, somewhat analogous to the basins of attraction of iterative methods such as Newton method for solving quadratic and cubic polynomials. However, these images are not fractal as would Newton's method be for $z^3-1$.
For analysis  and visualization in solving polynomial equations via iterative methods, called {\it polynomiography}, see \cite{Kalan}.

The images in Figure \ref{Fig3} show  the fixed points of $F(z)$ and the roots of $p(z)=z^n-1$.  For $n=2,3$ the fixed points are attractive.
The sword-like area in all four images represent a fast-convergence area. For $n=4,5$ the situation is different. The fixed points are not attractive, however for each fixed point there is small region of point that forms its basin of attraction (the sword-like area).  In other words,  according to definition (\ref{deffix}), the fixed points are not repulsive but appear to be indifferent. For $n=5$ the orbit is shown near a point that starts outside of the  unit circle and becomes chaotic after the first iteration.
\end{example}

\begin{figure}[h!]
\centering
\includegraphics[width=1.5in]{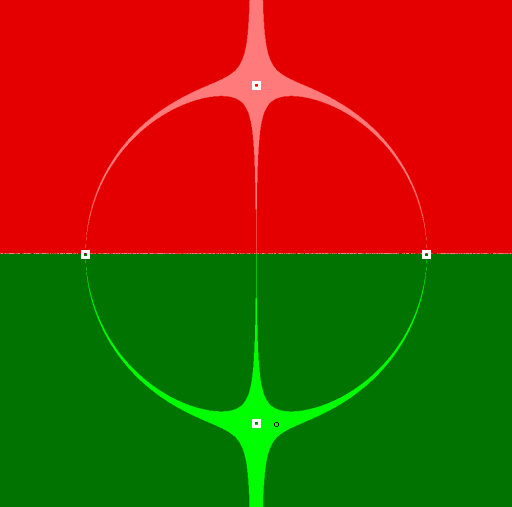}
\includegraphics[width=1.5in]{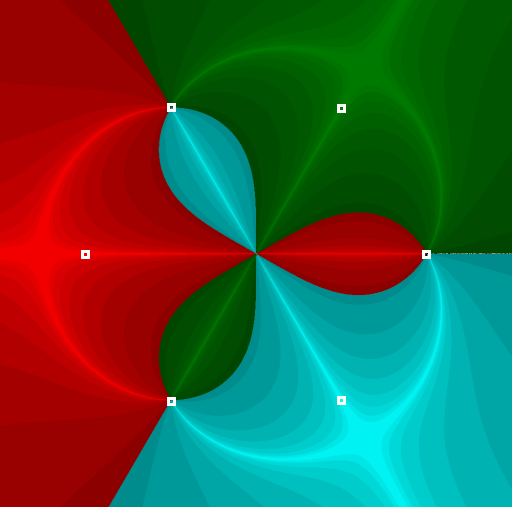}
\includegraphics[width=1.5in]{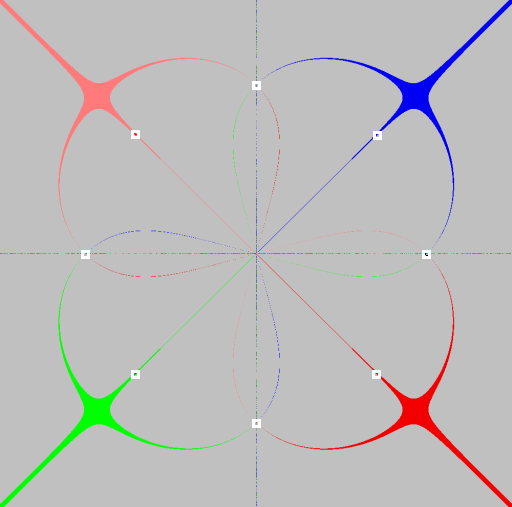}
\includegraphics[width=1.5in]{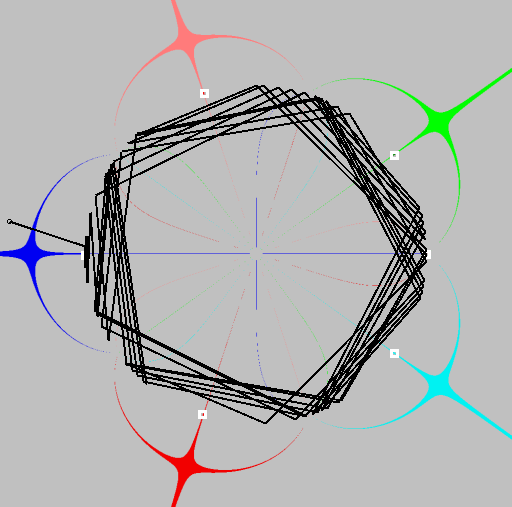}
\caption{Basins of attraction of fixed points of $F(z)$ for $p(z)=z^n-1$, $n=2-5$.} \label{Fig3}
\end{figure}

\subsection{Solving a Pseudo-Polynomial Equation Via a Pseudo-Newton Method}

Rather than solving $F(z)=z$ via fixed point iteration (\ref{eq5}), consider solving the following {\it pseudo-polynomial} equation
\begin{equation} \label{eqg}
G(z)=0,
\end{equation}
where
\begin{equation} \label{eqgz}
G(z)= p(z)|p'(z)|-z p'(z)|p(z)|.
\end{equation}
Clearly, $G(z_*)=0$ if and only if either $z_*$ is a fixed point of $F(z)$ (i.e. a local maximum of $|p(z)|$ over $D$), or $z_*$ is a zero of the product $p(z) \cdot p'(z)$.  To solve (\ref{eqg}), we consider a Newton-like methods, we call  {\it Pseudo-Newton} method:  Pick a seed $z_0 \in \mathbb{C}$.  Then given an iterate $z_k$,  apply one iteration of Newton method to solve
\begin{equation}
G_k(z)= p(z)|p'(z_k)|-z p'(z)|p(z_k)|=0.
\end{equation}
Specifically,  set
\begin{equation} \label{eqgk}
z_{k+1}=z_k- \frac{G_k(z_k)}{G'_k(z_k)}.
\end{equation}
Replace $z_k$ with $z_{k+1}$ and repeat the process.

Polynomiography of this Pseudo-Newton scheme is given in Figure \ref{Fig4}. We give some explanations and make a few observations.  Each fixed point of $F(z)$ is a zero of $G(z)$. From the figure the roots of $p(z)$ or $p'(z)$ appear to be repulsive fixed points of the Pseudo-Newton method.  Curiously,  near a root of $p(z)$ there is a small red neighborhood that seems to belong to the basin of attraction of  the Pseudo-Newton method corresponding to a fixed point of $F(z)$.  If this is valid for a general polynomial $p(z)$ it would suggest that to compute $\Vert p \Vert_\infty$ we would compute the roots of $p(z)$ and iterate the Pseudo-Newton method in a small neighborhoods of the roots of $p(z)$.

These simple examples reveal interesting behavior in the dynamic of fixed point iterations of $F(z)$,  and those corresponding to the Pseudo-Newton method for solving $G(z)=0$.   The study of convergence and rate of convergence of this Pseudo-Newton method is a subject of future research.  In fact it is possible to define more general iterative methods like the Pseudo-Newton method. In particular, the {\it Basic Family} is an infinite family of iteration functions for polynomial root-finding, $\{B_m(z): m=2, 3, \dots\}$, studied thoroughly, see \cite{Kalan}. While $B_2(z)$ is Newton's method,  at a simple roots of a polynomial the order of convergence of $B_m(z)$ is $m$. We can thus solve $G(z)=0$ by making use of any member of the Basic Family:  Given $z_k$ we compute
\begin{equation}
z_{k+1}= B_m(z_k),
\end{equation}
as applied to $G_k(z)$, then replace $z_k$ by $z_{k+1}$ and repeat.  Not only we would expect faster convergence than the case of Pseudo-Newton method but interesting polynomiography.  In forthcoming work we will carry out further  experimentation in solving $G(z)=0$ via Basic Family, as well as investigating their theoretical performance.

\begin{figure}[h!]
\centering
\includegraphics[width=1.5in]{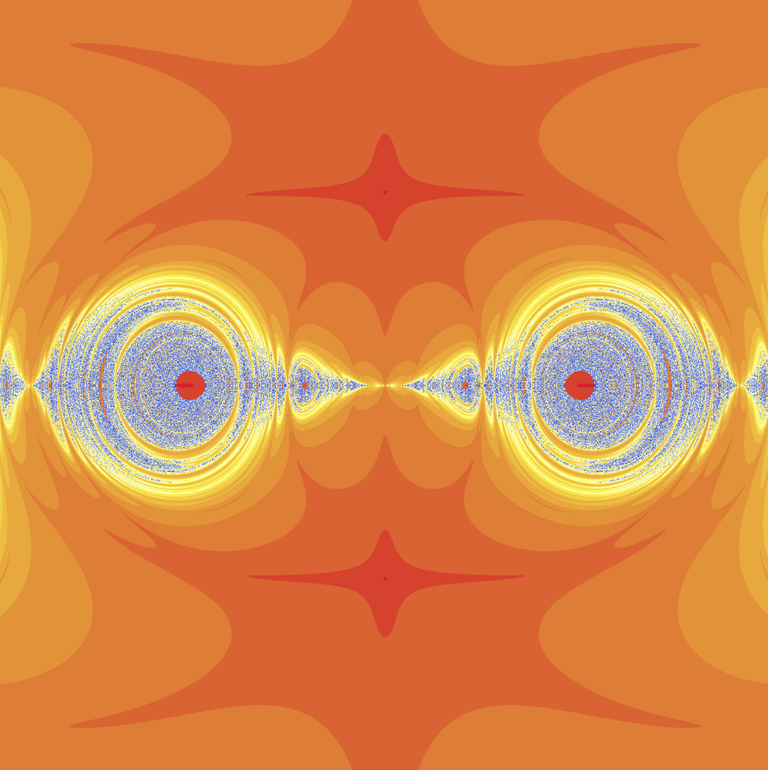}
\includegraphics[width=1.5in]{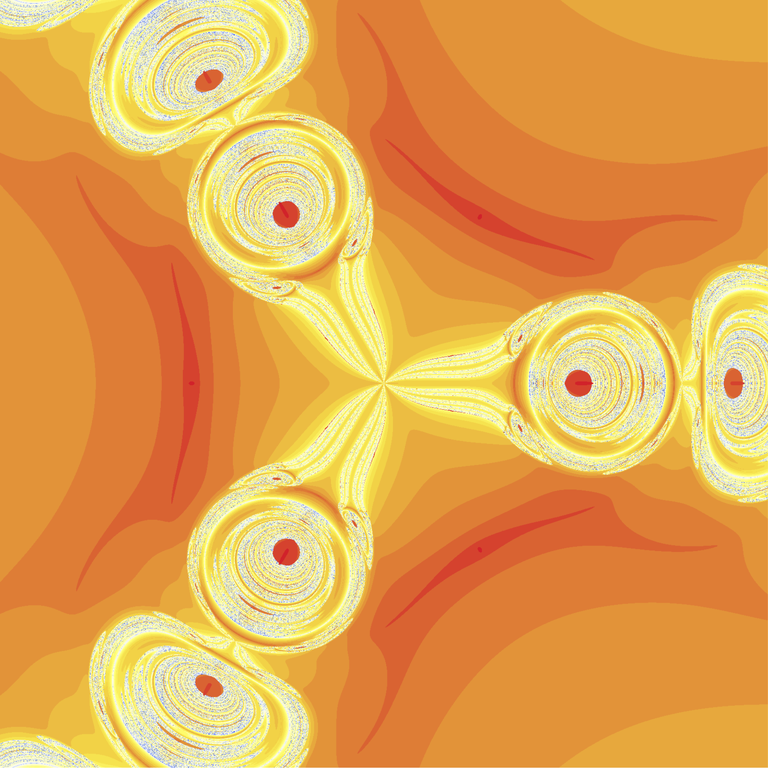}
\includegraphics[width=1.5in]{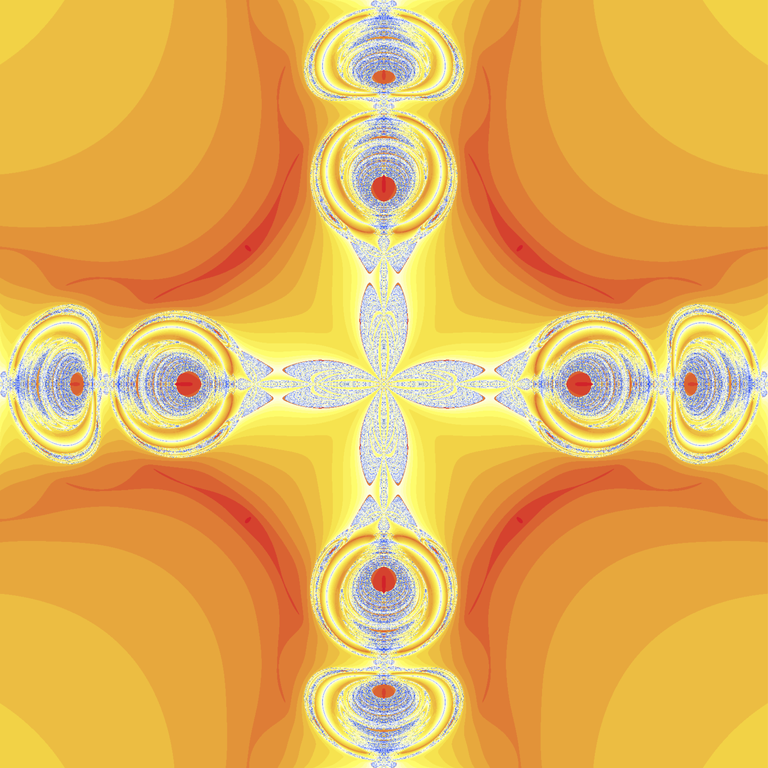}
\caption{Polynomiography of Pseudo-Newton for $p(z)=z^3-1$, $n=2,3,4$.} \label{Fig4}
\end{figure}

\section{Concluding Remarks} In this article we have derived a necessary and sufficient formula for  any local maximum of the modulus of a polynomial over the unit disc. Moreover, inspired by the formula we have described two iterative methods for computing a local maximum and $\Vert p \Vert_\infty$. We anticipate that these will result in  practical methods for approximating $|p|$, as well as leading to new theoretical  findings. These are supported by the few examples considered, in particular those based on pseudo-Newton iterations.

According to Bernstein's inequality (see e.g. \cite{Bernstein}), $\Vert p' \Vert_\infty \leq n \Vert p \Vert_\infty$. This implies if $z^* \in D$ satisfies $|p(z^*)|= \Vert p \Vert_\infty$, then
\begin{equation}
\bigg | \frac{p(z^*)}{p'(z^*)} \bigg | \geq \frac{1}{n}.
\end{equation}
Such information may become useful in the search for $z^*$.  Suppose that the ratio  $|p(z^*)|/|p'(z^*)|$ is known. Then  (\ref{eq3}) implies that computing $z^*$ amounts to solving a single polynomial equation of degree $n$, and computing a root whose modulus is one. In this sense, computing $\Vert p \Vert_\infty$ is at least as hard as solving the equation $p(z)-rzp'(z)=0$, where $r$ is a some constant satisfying $r \geq 1/n$. If we only know $|p(z^*)|$ but  $|p'(z^*)|$ is unknown the search for $z^*$ is harder.

Finally, our formula for local maxima over the unit disc easily generalizes to discs of radius $\rho$.  The literature on polynomial inequalities also deals with such maximization and their relation to $\Vert p \Vert_\infty$, e.g. see \cite{Qazi}. This also suggests the possibility of using path-following methods for computing $\Vert p \Vert_\infty$. We will explore these in a forthcoming work.

\paragraph{Acknowledgments.} I would like to thank Dr. Fedor Andreev of Western Illinois University for carrying out the computation resulting in images of Figure \ref{Fig4}.

\bigskip


\end{document}